\newsavebox{\savepar}
\newtheorem{theorem}{Theorem}[section]
\theoremstyle{definition}
\newtheorem{definition}[theorem]{Definition}
\theoremstyle{remark}
\newtheorem{proposition}[theorem]{\bf Proposition}
\theoremstyle{definition}
\theoremstyle{remark}
\begin{document}
	
	\newcommand{\norm}[1]{\left\lVert #1\right\rVert}
	\newcommand{\namelistlabel}[1]{\mbox{#1}\hfil}
	\newenvironment{namelist}[1]{%
		\begin{list}{}
			{
				\let\makelabel\namelistlabel
				\settowidth{\labelwidth}{#1}
				\setlength{\leftmargin}{1.1\labelwidth}
			}
		}{%
	\end{list}}

	\newcommand{\K}{\mathcal K}
	\newcommand{\inp}[2]{\langle {#1} ,\,{#2} \rangle}
	\newcommand{\vspan}[1]{{{\rm\,span}\{ #1 \}}}
	\newcommand{\R} {{\mathbb{R}}}
	
	\newcommand{\B} {{\mathcal{B}}}
	\newcommand{\C} {{\mathbb{C}}}
	\newcommand{\N} {{\mathbb{N}}}
	\newcommand{\Q} {{\mathbb{Q}}}
	\newcommand{\LL} {{\mathbb{L}}}
	\newcommand{\Z} {{\mathbb{Z}}}

	\title{Spectral Theorem for a bounded self adjoint operator on a Bicomplex Hilbert space}
	\author{Akshay S. RANE\footnote{Department of Mathematics, Institute of Chemical Technology, Nathalal Parekh Marg, Matunga, Mumbai 400 019, India, email :  as.rane@ictmumbai.edu.in}  ~ and ~ Mandar THATTE\footnote{Department of Mathematics, Institute of Chemical Technology, Nathalal Parekh Marg, Matunga, Mumbai 400 019, India, email :  007thattecharlie@gmail.com } 
		\hspace {1mm}
	}
	\date{ }
	\maketitle

	\begin{abstract}
		In this paper,  we shall consider the notion of bicomplex inner product and define bicomplex Hilbert space. We shall define $L^{2}[a,b]$ where the functions take bicomplex values. We shall prove the Theorem for a bounded self adjoint operator on a bicomplex Hilbert space which is not compact.
	\end{abstract}
	
	\noindent
	Key Words : Bi complex inner product, Bicomplex Hilbert module, Self Adjoint operator, $L^2[a,b]$, Spectral theorem.

	\smallskip
	\noindent
	AMS  subject classification : 16D10, 30G35, 46C05, 46C50.
	\newpage
	\newpage
	
	
	\setcounter{equation}{0}
	\section{Introduction}
	The  notion of Bicomplex numbers and hyperbolic has been introduced in \cite{ALSS}. They have also discussed the idempotent representation of bicomplex numbers. Fundamental theorems in functional analysis like the Baire Category theorem, open mapping theorem, closed graph theorem, uniform boundedness principle for modules over bicomplex/hyperbolic numbers have been proved in \cite{HAR}. Zabreiko's lemma and as a consequence the fundamental theorems have been proved in \cite{ASR}.The notion of bicomplex inner product and definition of adjoint operator has been dicussed in \cite{Ger} and \cite{Ger2}. The notions of spectrum and resolvent operator in case of a bicomplex Banach module is introduced in \cite{CSS}. The spectral decomposition theorem for a compact self adjoint operator on a bicomplex Hilbert space has been proved in \cite{Cha}. Spectral theorem for a self adjoint on the usual Hilbert space is well known. One such reference is\cite{BR}. In this paper, we shall first define the space $L^2[a,b]$, where the functions defined on $[a,b]$ takes bicomplex values. We define the inner product on this space consistent as what is done for $l_2$ in \cite{Cha2}. Later we shall prove the spectral theorem for a bounded self adjoint operator on a Bicomplex Hilbert space which is not compact.
	The set of bi complex numbers is defined as $$ \mathbb{B} \mathbb{C}= \lbrace Z= z_{1} + z_{2}j | z_{1},z_{2} \in C(i) \rbrace $$ where $i$, $j$ are such that $ij=ji$ and $i^{2}=j^{2}=-1.$ 
	The set $\mathbb{B} \mathbb{C}$ forms a Ring under the usual addition and multiplication. The product of imaginary units $i$ and $j$ defines a hyperbolic unit $k$ such that $k^{2}=1.$
	Since $ \mathbb{C}$ contains two imaginary units whose square is -1 and one hyperbolic unit whose square is 1, we consider three conjugations for bi complex numbers.
	\begin{enumerate}
		\item $\overline{Z}= \overline{z_{1}}+\overline{z_{2}}j$  (\textbf{bar-conjugation}).
		\item $Z^{+}= z_{1}-z_{2}j$  (\textbf{$+$ - conjugation}).
		\item $Z^{*}= \overline{z_{1}}-\overline{z_{2}}j$  (\textbf{*- conjugation}).
	\end{enumerate}
	The set $\mathbb{D}$ of hyperbolic numbers is defined as $$ \mathbb{D} = \lbrace h = h_{1} + k h_{2} | h_{1},h_{2} \in \mathbb{R} \rbrace.$$ The set $\mathbb{D}$ is a ring and module over itself. Also, $$ \mathbb{D^{+}} =\lbrace h =e_{1}a_{1}+ e_{2}a_{2} | a_{1},a_{2} \geq 0 \rbrace$$ is the set of all positive hyperbolic numbers. 
	We define $e_{1}$ and $e_{2}$ as $$ e_{1} =\frac{1+k}{2} , e_{2}= \frac{1-k}{2}$$ and satisfy following properties $$ e_{1}^{2}=e_{1},  e_{2}^{2}=e_{2}, e_{1}+e_{2}=1, e_{1}e_{2}=0, e_{1}^{*}=e_{1}, e_{2}^{*}=e_{2}$$
	In fact $e_{1}$ and $e_{2}$ are the idempotent basis of bi complex numbers and any bi complex number can be uniquely written as $$ w= z_{1} + z_{2}j =w_{1}e_{1}+w_{2}e_{2} $$ where, $$ w_{1}=z_{1}-z_{2}i,  w_{2}=z_{1}+z_{2}i.$$ 
	Also, $$ |w| = \frac{\sqrt{|w_{1}|^{2} + |w_{2}|^{2}}}{\sqrt{2}}.$$
	Let X be a $\mathbb{B}\mathbb{C}$ module then its idempotent representation is $$ X= e_{1}X_{1} \oplus e_{2}X_{2}=e_1{(e_1X)}\oplus e_2{(e_2X)} .$$
	\section{Bicomplex Inner Product}
	We now consider the bicomplex inner product.Let $X$ be $\mathbb{B} \mathbb{C}$ module. A map $$ \langle.,. \rangle : X \times X \rightarrow \mathbb{B} \mathbb{C}$$ is said to be a $\mathbb{B} \mathbb{C}$ inner product if it satisfies following properties for all $x,y,z \in X$ and $\alpha \in \mathbb{B} \mathbb{C}$:
	\begin{enumerate}
		\item $\langle x,y+z \rangle $ = $\langle x,y \rangle  + \langle x+z \rangle $
		\item $\langle \alpha x,y \rangle $ = $\alpha \langle x,y \rangle  $
		\item $\langle x,y \rangle $= $\langle y,x \rangle ^{*}$
		\item $\langle x,x \rangle  \in \mathbb{D^{+}}$ and $\langle x,x \rangle =0$ if and only if $x=0.$
	\end{enumerate}
	\begin{definition}
		A $\mathbb{B} \mathbb{C}$ module $X$ endowed with a bi complex inner product $ \langle .,. \rangle $ is said
		to be a $\mathbb B \mathbb C$ inner product module.
	\end{definition}
	Suppose $X_1$ and $X_2$ over $C[i]$ are equipped with the inner product $\langle .,. \rangle_1$ and $\langle .,. \rangle_2$ and corresponding norms 
	$\|.\|_1$ and $\|.\|_2.$ The formula $ \langle .,.\rangle_X= e_1 \langle x_1, w_1 \rangle_1 +e_2 \langle x_2, w_2 \rangle_2 $ defines bi complex inner product on the bi complex module $X.$
	The hyperbolic norm can be introduced on the inner product $\mathbb B \mathbb C$ module given by
	$$ \|x\|_{\mathbb D} = e_1 \|x_1\|_{1}+ e_2 \|x_2\|_2.$$
	It is also possible to introduce a real valued norm on Bicomplex module by setting
	$$ \|x\|_X^2= \frac{1}{2} ( \|x_1\|_1^2 + \|x_2\|_2^2).$$
	The relation between the two norms is $$|\|x\|_{\mathbb X}|=\|x\|_{\mathbb D}$$
	\begin{definition}
		A $ \mathbb B \mathbb C$ inner product module $X$ is said to be a bi complex Hilbert module if $X$ is complete with respect to the hyperbolic norm generated by the inner product square. 
	\end{definition} \noindent
	Any function $f:[a,b] \rightarrow \mathbb B \mathbb C$ can be written as 
	$$f(t) = x_0(t) + ix_1(t) +jx_2(t)+ij x_3(t)= $$
	$$[(x_0(t) +x_3(t)) + i(x_1(t) -x_2(t)) ]e_1 + [(x_0(t) -x_3(t)) + i(x_1(t) +x_2(t)) ]e_2=u(t) e_1 + v(t) e_2 $$ where the functions $u,v: [a,b] \rightarrow \mathbb{C} =C[i]$ .
	$$ |f(t)|^2=|x_0(t)|^2+|x_1(t)|^2+|x_2(t)|^2+|x_3(t)|^2.  $$
	Let $L^2[a,b]$ denote the space of bi complex valued functions that are square integrable 
	Let $ \mu$ denote the lebesgue measure on $ [a,b].$
	$$ \int_a^b |f(t)|^2 d \mu(t) < \infty.$$
	The inner product $$ \langle f, g \rangle = \int_a^b f(t) \overline{g(t)} d\mu(t) $$
	gives 
	$$ \langle u, u \rangle = \int_a^b |x_0(t) +x_3(t)|^2 +|x_1(t) -x_2(t)|^2 d\mu(t) $$
	and 
	$$  \langle v, v \rangle = \int_a^b |x_0(t) -x_3(t)|^2 +|x_1(t) +x_2(t)|^2 d\mu(t).$$
	$$\frac{ \|u\|_2^2+ \|v\|_2^2 }{2}= \int_a^b (|x_0(t)|^2+|x_1(t)|^2+|x_2(t)|^2+|x_3(t)|^2)d\mu(t) =\|f\|_X^2$$
	$$ \|f\|_X = \sqrt{\frac{\|u\|_2^2+ \|v\|_2^2 }{2}}$$
	$\mathbb{B} \mathbb{C}$ module $X$ can also be viewed as a normed vector space $X^\prime$ over $\mathbb{C}=C[i].$ (See \cite{Cha}.) $L^2[a,b]$ becomes a bicomplex Hilbert space with respect to the  norm of the associated vector space $L^2[a,b]^\prime$  over $ \mathbb{C}=C[i]$ is
 defined by $$ \int_a^b |f(t)|^2 d \mu(t).$$
	Note that $$ L^2 [a,b]^\prime = (e_1L^2[a,b]) +(e_2L^2[a,b])$$
	$L^2[a,b]$.
	Let $H$ be a bicomplex Hilbert space. Let $T:H \rightarrow $ be a bi complex linear operator over $H$. The adjoint of the operator is defined as 
		$$ \langle x, Ty \rangle = \langle T^* x,y \rangle $$ for all $x.y \in H$
	 Then the adjoint operator $T^*$ for $H$ considered as a bicomplex module is same as the adjoint of  operator on $H^\prime$ that is $H$ considered as $\mathbb{C}=C[i]$ vector space. $T$ is self adjoint if $T^*=T.$
	The bi complex space $L^2[a,b]$ is a $\mathbb{B} \mathbb{C}$ module. 
	\section{Main Results}
	\begin{definition}

		Let X  Bi complex Module. A map $T: X \rightarrow X$ is said to be linear if for all $a,b \in X$, we have $T(a+b)= T(a) + T(b)$ and $T(\mu a)=\mu T(a)$ for $\mu \in \mathbb{B}\mathbb{C}.$
	\end{definition}
	
	We now prove the special case of Spectral Theorem for the case when the bicomplex Hilbert space has a cyclic vector. A vector is said to be cyclic if the closure of the span of the elements $\lbrace x, Tx, T^2x, \ldots ,\rbrace $ is the whole of $X.$
	
	\noindent
	\begin{theorem}
		Let $H$ be a bi complex Hilbert module. Let $T$ be a linear operator on $H.$ Let  $H= e_{1}H_{1}+e_{2}H_{2}$ be its idempotent decomposition. Suppose $H$ has a cyclic vector then $H_{1}=e_1H$ and $H_{2}=eH$ also have a cyclic vector.
	\end{theorem}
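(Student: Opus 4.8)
The plan is to exploit the single structural fact that $T$ is $\mathbb{B}\mathbb{C}$-linear, so that it commutes with multiplication by the idempotents $e_1$ and $e_2$. First I would record that since $e_1 \in \mathbb{B}\mathbb{C}$ and $T(\mu a)=\mu T(a)$, one has $T(e_1 a)=e_1 T(a)$ and likewise $T(e_2 a)=e_2 T(a)$ for every $a\in H$. Because $e_1^2=e_1$, any $b\in H_1=e_1H$ satisfies $e_1 b=b$, whence $T(b)=e_1T(b)\in H_1$; thus $T$ restricts to a linear operator $T_1$ on $H_1$, and symmetrically to $T_2$ on $H_2$. Moreover $T_1^{\,n}(e_1 x)=e_1 T^n x$ and $T_2^{\,n}(e_2 x)=e_2 T^n x$ for all $n\ge 0$, which is the identity that will do all the work.

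Next, let $x$ be a cyclic vector for $H$ and set $\xi_1=e_1 x\in H_1$ and $\xi_2=e_2 x\in H_2$; I claim $\xi_1$ is cyclic for $H_1$ (the case of $\xi_2$ being identical). The key tool is the projection $P_1:H\to H_1$ defined by $P_1 a=e_1 a$. Writing $a=e_1a_1+e_2a_2$ in idempotent form, one has $e_1 a=e_1 a_1$, so by the real-valued norm on $X'$, $\|e_1a\|_X^2=\tfrac12\|a_1\|_1^2\le\|a\|_X^2$; hence $P_1$ is a contraction, in particular bounded and continuous, and it is surjective onto $H_1$ with $P_1|_{H_1}=\mathrm{id}$.

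Now I would push the cyclicity of $x$ through $P_1$. A generic element of $\vspan{x, Tx, T^2x, \ldots}$ (finite $\mathbb{B}\mathbb{C}$-linear combinations) has the form $\sum_n \mu_n T^n x$; applying $P_1$ and using that $e_1$ commutes with each $\mu_n$ together with $e_1 T^n x=T_1^{\,n}\xi_1$ gives $\sum_n (e_1\mu_n)\,T_1^{\,n}\xi_1$, and since $e_1\mu_n$ ranges over $e_1\mathbb{B}\mathbb{C}\cong C(i)$ this is precisely a $C(i)$-linear combination of the $\{T_1^{\,n}\xi_1\}$. Hence $P_1\bigl(\vspan{x, Tx, \ldots}\bigr)=\vspan{\xi_1, T_1\xi_1, T_1^2\xi_1, \ldots}$ as a subspace of $H_1$. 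Combining this with continuity of $P_1$ and the cyclicity hypothesis,
$$ H_1 = P_1(H) = P_1\left(\overline{\vspan{x, Tx, \ldots}}\right) \subseteq \overline{P_1\bigl(\vspan{x, Tx, \ldots}\bigr)} = \overline{\vspan{\xi_1, T_1\xi_1, \ldots}} \subseteq H_1, $$
so every inclusion is an equality and $\xi_1$ is cyclic for $H_1$. The same argument with $e_2$ and $P_2$ shows $\xi_2=e_2 x$ is cyclic for $H_2$.

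The only delicate point — and the step I would treat most carefully — is the middle equality $P_1\bigl(\vspan{\cdots}\bigr)=\vspan{\xi_1,\ldots}$ over the correct scalar field: one must verify that multiplying a $\mathbb{B}\mathbb{C}$-combination by $e_1$ annihilates the $e_2$-part and leaves precisely a $C(i)$-combination of the powers $T_1^{\,n}\xi_1$, with no element of $H_1$ lost in passing from the $\mathbb{B}\mathbb{C}$-span to its image under $P_1$. Everything else — continuity of $P_1$, commuting $P_1$ with the closure via $P_1(\overline{A})\subseteq\overline{P_1(A)}$, and the invariance of $H_1$ and $H_2$ under $T$ — is routine once the commutation $Te_1=e_1T$ is in hand.
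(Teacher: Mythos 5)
Your proof is correct and takes essentially the same route as the paper's: both arguments rest on the identity $T^{k}(e_{1}x)=e_{1}T^{k}x$ together with the fact that convergence in $H$ forces convergence of the $e_{1}$-component, which you package as continuity of the projection $P_{1}a=e_{1}a$ while the paper verifies it directly on approximating sequences $S_{n}\to u$ using the norm relation $\|S_{n}-u\|^{2}\geq \tfrac{1}{2}\|S_{n}'-u_{1}\|_{1}^{2}$. Your handling of the scalar issue ($e_{1}\mathbb{B}\mathbb{C}\cong C(i)$, so the image of the $\mathbb{B}\mathbb{C}$-span is exactly the $C(i)$-span of $\{T_{1}^{n}\xi_{1}\}$) is sound and, if anything, slightly more carefully stated than in the paper.
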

	\begin{proof}
		Let $w$  be a cyclic vector in $H.$  Since $ H= e_{1}H_{1}+e_{2}H_{2}$,  $w=e_{1}w_{1}+e_{2}w_{2}=e_1(e_1w) +e_2(e_2w).$ $\overline{ Span \lbrace w, Tw,T^{2}w,\ldots \rbrace }=H.$\\ 
		Claim: $H_{1}$ has a cyclic vector $e_{1}w.$
		Let $z \in H_{1}$, so $z=e_{1}u$ , $u \in H.$ Since, $u \in \overline { Span \lbrace w, Tw.T^{2}w,...\rbrace}.$ So there exists a sequence $S_{n} \in span \lbrace w,Tw,T^{2}w,...\rbrace$ such that $S_{n} \rightarrow u=e_1(e_1u) +e_2(e_2u)=e_1u_1+e_2u_2.$ So,$$ S_{n}= \sum_{k=0}^{m(n)} \alpha_{k}T^{k}(w)$$ where $\alpha_{k}=e_{1}\alpha_{k}^{'}+e_{2}\alpha_{k}^{''}.$ Since
		$$T(w)=e_{1}T_{1}(e_1w)+e_{2}T_{2}(e_2w)$$ so, $$T^{k}(w)=e_{1}T_{1}^{k}(e_1w)+e_{2}T_{2}^{k}(e_2w)$$ for each $k \in N.$ Hence, $$ S_{n} = \sum_{k=0}^{m(n)} \alpha_{k}^{'}e_{1}T_{1}^{k}(e_1w) + \alpha_{k}^{''}e_{2}T_{2}^{k}(e_2w)= e_{1}\sum_{k=0}^{m(n)} \alpha_{k}^{'}T_{1}^{k}(e_1w) + e_2\sum_{k=0}^{m(n)} \alpha_{k}^{''}T_{2}^{k}(e_2w).$$ $$ = e_{1}S_{n}^{'}+e_{2}S_{n}^{''}.$$ Hence, $$| \| S_{n} - u\|_{\mathbb{D}}|^{2}=\| S_{n} - u \|^{2}= \frac{\sqrt{\| S_{n}^{'}-u_{1}\|_{1}^{2} + \| S_{n}^{''} -u_{2}\|_{2}^{2}}}{\sqrt{2}} \geq 0.$$ 
		Since $\| S_{n} - u\|_{\mathbb{D}}^{2} \rightarrow 0 $ we get $S_{n}^{'} \rightarrow u_{1}$ and $S_{n}^{''} \rightarrow u_{2}.$ Thus, $$ H_{1}= \overline{span \lbrace w_{1},Tw_{1},T^{2}w_{1},....\rbrace}$$ and $$H_{2}= \overline{span \lbrace w_{2},Tw_{2},T^{2}w_{2},...\rbrace}. $$ Therefore, $H_{1}$ has a cyclic vector $e_1w$ and $H_{2}$ has a cyclic vector $e_2w.$ 
	\end{proof}\noindent
	This following proposition is proved for the usual Hilbert space in Bhatia \cite{BR}.
	\begin{proposition}
         Suppose $T$ is a self adjoint operator on the usual Hilbert space $H$ with a cyclic vector $H.$ Then there exists a probability measure $ \mu$ on the interval $[-\|T\|,\|T\|]$ and a unitary operator $U$ from $H$ onto $L^2[-\|T\|,\|T\|]$ such that $UTU^*= M$ the canonical multiplication operator on $L^2[-\|T\|,\|T\|].$
	\end{proposition}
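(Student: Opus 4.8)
The plan is to manufacture the measure $\mu$ and the unitary $U$ directly from a cyclic vector, using the polynomial functional calculus together with the Riesz--Markov representation theorem. I begin by fixing a cyclic vector $v$, normalized so that $\norm{v}=1$. Since $T$ is bounded and self adjoint its spectrum $\sigma(T)$ is contained in the real interval $[-\norm{T},\norm{T}]$. For a polynomial $p$ with real coefficients $p(T)$ is again self adjoint, and the spectral mapping property $\sigma(p(T))=p(\sigma(T))$ shows that whenever $p\geq 0$ on $[-\norm{T},\norm{T}]$ one has $\sigma(p(T))\subseteq[0,\infty)$, hence $p(T)\geq 0$. This positivity is exactly what is needed to make
\[
\Phi(p):=\inp{p(T)v}{v}
\]
a \emph{positive} linear functional on the space of polynomials.

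Next I would invoke the Weierstrass approximation theorem: polynomials are uniformly dense in $C[-\norm{T},\norm{T}]$, so $\Phi$ extends to a positive linear functional on the whole of $C[-\norm{T},\norm{T}]$. The Riesz--Markov theorem then yields a unique finite positive Borel measure $\mu$ on the interval with $\Phi(f)=\int_{-\norm{T}}^{\norm{T}}f\,d\mu$ for every continuous $f$. Since $\Phi(1)=\inp{v}{v}=1$, the total mass is $1$ and $\mu$ is a probability measure, as claimed.

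With $\mu$ in hand I would define $U$ on the dense subspace $\{p(T)v:\ p \text{ a polynomial}\}$ by $U\big(p(T)v\big)=p$, regarding $p$ as an element of $L^2(\mu)$. The decisive computation is
\[
\norm{p(T)v}^2=\inp{(p(T))^*p(T)\,v}{v}=\Phi\big(|p|^2\big)=\int_{-\norm{T}}^{\norm{T}}|p|^2\,d\mu=\norm{p}_{L^2(\mu)}^2 ,
\]
where $(p(T))^*=\bar p(T)$ by self adjointness of $T$. This single identity shows at once that $U$ is well defined (if $p(T)v=q(T)v$ then $\norm{p-q}_{L^2(\mu)}=0$) and that it is an isometry. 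Because $v$ is cyclic, $\{p(T)v\}$ is dense in $H$, and because polynomials are dense in $L^2(\mu)$, the isometry $U$ extends to a surjection, that is, a unitary map of $H$ onto $L^2(\mu)$.

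Finally I would verify the intertwining relation on the dense set of polynomials. If $M$ denotes multiplication by the coordinate function $x$, then $UT\big(p(T)v\big)=U\big((xp)(T)v\big)=xp=M p=M\,U\big(p(T)v\big)$, so $UT=MU$ on a dense subspace and therefore everywhere; equivalently $UTU^*=M$. The step I expect to be the main obstacle is the pair of facts that power the whole construction: the implication $p\geq 0\Rightarrow p(T)\geq 0$ (which rests on the polynomial functional calculus and spectral mapping, and underlies positivity of $\Phi$) and the density of polynomials in $L^2(\mu)$ (which is what forces $U$ to be onto rather than merely isometric).
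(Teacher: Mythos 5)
Your proof is correct and is essentially the canonical argument: the paper itself gives no proof of this proposition, deferring entirely to Bhatia's \emph{Notes in Functional Analysis}, and the construction you carry out (the positive functional $p\mapsto \langle p(T)v,v\rangle$ via spectral mapping, Riesz--Markov to obtain the probability measure $\mu$, the isometry $p(T)v\mapsto p$ extended by cyclicity and density of polynomials in $L^2(\mu)$, and the intertwining $UT=MU$ on a dense set) is exactly the standard proof found in that reference. Nothing further is needed.
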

	\noindent
	We now have the following proposition.
	
	\begin{proposition}
		Suppose $T$ is a self adjoint operator on the bicomplex Hilbert module $H$ with a cyclic vector $H$. Then there exists a probability measures $ \mu_1$ and $\mu_2$  on the interval $[-\|T_1\|,\|T_1\|]$ and $[-\|T_2\|,\|T_2\|]$  unitary operators $U_1$ from $H_1$ onto $L^2[-\|T_1\|,\|T_1\|]$ and  $U_2$ from $H_2$ onto $L^2[-\|T_2\|,\|T_2\|]$ such that $UTU^*= M$, where $U=e_1U_1+e_2 U_2$, $ T= e_1T_1+e_2T_2 $ and $ M= e_1 M_1 +e_2M_2.$
	\end{proposition}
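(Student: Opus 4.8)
The plan is to reduce the bicomplex statement to two applications of the classical spectral proposition stated above, using the idempotent decomposition $H = e_1 H_1 \oplus e_2 H_2$ with $H_1 = e_1 H$ and $H_2 = e_2 H$. Writing $T = e_1 T_1 + e_2 T_2$, where $T_i$ is the induced operator on $H_i$ (exactly as in the proof of the preceding Theorem, where $T(w) = e_1 T_1(e_1 w) + e_2 T_2(e_2 w)$), the first task is to verify that $T_1$ and $T_2$ are self-adjoint as operators on the $\mathbb{C} = C[i]$ Hilbert spaces $H_1$ and $H_2$. I would do this by expanding the bicomplex self-adjointness relation $\langle x, T y \rangle = \langle T x, y \rangle$ through the component form $\langle x, y \rangle_X = e_1 \langle x_1, y_1 \rangle_1 + e_2 \langle x_2, y_2 \rangle_2$. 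Since $e_1^* = e_1$ and $e_2^* = e_2$, projecting the identity onto its $e_1$- and $e_2$-parts and using $e_1 e_2 = 0$ yields $\langle x_1, T_1 y_1 \rangle_1 = \langle T_1 x_1, y_1 \rangle_1$ together with the analogous identity for $T_2$, so each $T_i = T_i^*$ on the associated complex space.

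Next I would produce the cyclic vectors for the components. By the preceding Theorem the cyclic vector $w = e_1 w_1 + e_2 w_2$ for $(H,T)$ gives a cyclic vector $e_1 w$ for $(H_1, T_1)$ and a cyclic vector $e_2 w$ for $(H_2, T_2)$. Consequently each pair $(H_i, T_i)$ satisfies all the hypotheses of the classical spectral proposition. Applying that proposition separately to $i = 1$ and $i = 2$ produces probability measures $\mu_1$ on $[-\|T_1\|, \|T_1\|]$ and $\mu_2$ on $[-\|T_2\|, \|T_2\|]$, unitary operators $U_i \colon H_i \to L^2[-\|T_i\|, \|T_i\|]$, and canonical multiplication operators $M_i$ with $U_i T_i U_i^* = M_i$ for each $i$.

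Finally I would reassemble the bicomplex objects. Using the identification $L^2[a,b]' = (e_1 L^2[a,b]) + (e_2 L^2[a,b])$ recorded in Section 2, set $U = e_1 U_1 + e_2 U_2$ and $M = e_1 M_1 + e_2 M_2$. The adjoint in the bicomplex module is $U^* = e_1 U_1^* + e_2 U_2^*$, so the idempotent relations $e_1^2 = e_1$, $e_2^2 = e_2$, $e_1 e_2 = 0$, $e_1 + e_2 = 1$ give $U U^* = e_1 U_1 U_1^* + e_2 U_2 U_2^* = e_1 + e_2 = 1$ and similarly $U^* U = 1$, so $U$ is a bicomplex unitary. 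The same orthogonality of $e_1$ and $e_2$ collapses all cross terms in the conjugation, leaving $U T U^* = e_1 U_1 T_1 U_1^* + e_2 U_2 T_2 U_2^* = e_1 M_1 + e_2 M_2 = M$, as required.

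I expect the main obstacle to be the first step: confirming rigorously that bicomplex self-adjointness descends to genuine self-adjointness of each complex component $T_i$. This is the only place where the interaction among the $*$-conjugation, the idempotent projections, and the component inner products $\langle\,.\,,\,.\,\rangle_1$, $\langle\,.\,,\,.\,\rangle_2$ must be handled with care, and it relies essentially on $e_1^* = e_1$, $e_2^* = e_2$, and the earlier remark that the bicomplex adjoint coincides with the adjoint of the associated $\mathbb{C} = C[i]$ operator. Once this is established, the remaining steps are routine bookkeeping with the idempotent algebra.
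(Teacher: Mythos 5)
Your proposal follows essentially the same route as the paper: decompose $H$, $T$ via the idempotents, observe that $T_1$, $T_2$ are self-adjoint on the complex Hilbert spaces $H_1$, $H_2$ with cyclic vectors supplied by the preceding theorem, apply the classical proposition componentwise, and reassemble $U=e_1U_1+e_2U_2$, $M=e_1M_1+e_2M_2$. Your writeup is in fact more careful than the paper's, which simply asserts the componentwise self-adjointness and the final conjugation identity without the verifications you supply.
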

	\begin{proof}
		Since $H_1$ and $H_2$ are complex Hilbert spaces, $T_1:H_1 \rightarrow H_1$ and $T_2:H_2 \rightarrow H_2$ are self adjoint operators. By the above proposition, There exists probability measures $\mu_1, \mu_2$ on $[-\|T_1\|,\|T_1\|] $ and $[-\|T_2\|,\|T_2\|] $ and unitary operators $U_1: H_1 \rightarrow L^2( \mu_1)$ and $U_2:H_2 \rightarrow L^2( \mu_2)$ such that $U_1T_1U_1^*=M_1$ and  $U_2T_2U_2^*=M_2$. $M_1$ and $M_2$ are canonical multiplication operators in $L^2( \mu_1)$ and $L^2( \mu_2)$  respectively. Set $U=e_1U_1 +e_2U_2$ and $T=e_1T_1 +e_2T_2$ the idempotent decomposition. We then have $$ U^*TU=M=e_1M_1+e_2M_2.$$
	\end{proof}
	We now prove the spectral theorem for a bounded self adjoint operator.
	\begin{theorem}
		Let $T$ be a bounded self adjoint operator on a separable bicomplex, countabely generated Hilbert space $H.$ Then there exists probabilty measures $\mu_1,\mu_2,\ldots, \mu_1^\prime,\mu_2^\prime, \ldots  $ on the intervals $[-\|T_1\|,\|T_1\|] $ and $[-\|T_2\|,\|T_2\|] $ and unitary operators $U_1(n): H_1 \rightarrow L^2( \mu_n)$ and  $U_2(n): H_2 \rightarrow L^2( \mu_n^\prime )$ such that
		$$UTU^*= M$$ where $M$ is the multiplication operator on $[e_1L^2(\mu_1) + e_2 L^2(\mu_1^\prime)]\oplus[e_1L^2(\mu_2) \oplus e_2 L^2(\mu_2^\prime)] \oplus \ldots $
	\end{theorem}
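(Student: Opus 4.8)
The plan is to reduce the general separable, countably generated case to the cyclic situation already handled in the preceding proposition, by decomposing each idempotent component into cyclic subspaces. First I would invoke the idempotent representation $H = e_1 H_1 \oplus e_2 H_2$ with $T = e_1 T_1 + e_2 T_2$, where, as recorded earlier in the excerpt, $T_1$ and $T_2$ are bounded self adjoint operators on the complex Hilbert spaces $H_1 = e_1 H$ and $H_2 = e_2 H$. Because $H$ is separable and countably generated, each of $H_1$ and $H_2$ is a separable complex Hilbert space, so the whole argument splits into two parallel classical problems, one for $(H_1, T_1)$ and one for $(H_2, T_2)$.

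Second, on each complex component I would appeal to the standard cyclic-subspace decomposition for a bounded self adjoint operator on a separable Hilbert space: choosing a countable dense set and running a greedy Gram--Schmidt exhaustion yields an orthogonal direct sum $H_1 = \bigoplus_n H_1^{(n)}$ into mutually orthogonal, $T_1$-invariant subspaces, each cyclic for the restriction $T_1|_{H_1^{(n)}}$, and similarly $H_2 = \bigoplus_n H_2^{(n)}$. By padding with zero subspaces we may index both families by the same set $\mathbb{N}$. Since each restriction has operator norm at most $\|T_1\|$ (respectively $\|T_2\|$), its spectrum lies in the fixed interval $[-\|T_1\|,\|T_1\|]$ (respectively $[-\|T_2\|,\|T_2\|]$), which is what lets all the measures be supported on the stated intervals.

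Third, apply the cyclic-vector proposition proved above for the usual Hilbert space to each piece: for every $n$ there is a probability measure $\mu_n$ on $[-\|T_1\|,\|T_1\|]$ and a unitary $U_1(n): H_1^{(n)} \to L^2(\mu_n)$ with $U_1(n)\,T_1|_{H_1^{(n)}}\,U_1(n)^* = M_1^{(n)}$ the canonical multiplication operator, and likewise measures $\mu_n'$ and unitaries $U_2(n): H_2^{(n)} \to L^2(\mu_n')$. Taking direct sums gives $U_1 = \bigoplus_n U_1(n)$ and $U_2 = \bigoplus_n U_2(n)$, each intertwining $T_i$ with the direct sum $M_i = \bigoplus_n M_i^{(n)}$ of multiplication operators, which is again multiplication by the identity coordinate on the disjoint-union measure space. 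Finally I would set $U = e_1 U_1 + e_2 U_2$ and $M = e_1 M_1 + e_2 M_2$ and reassemble the idempotent components exactly as in the previous bicomplex proposition, so that $U$ is unitary from $H$ onto $[e_1 L^2(\mu_1)+e_2 L^2(\mu_1')]\oplus[e_1 L^2(\mu_2)\oplus e_2 L^2(\mu_2')]\oplus\cdots$ and $UTU^* = M$.

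I expect the main obstacle to be the second step: carefully establishing and bookkeeping the orthogonal cyclic decomposition in the separable (rather than finite-dimensional) setting, and verifying that an orthogonal direct sum of multiplication operators is itself unitarily a single multiplication operator on the amalgamated measure space. A secondary subtlety is that the two idempotent decompositions are produced independently and may carry different multiplicities, so one must confirm that padding by zero subspaces genuinely forces them into the single bicomplex direct sum written in the statement.
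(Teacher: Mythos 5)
Your proof is correct and reaches the same endpoint, but it routes the two reductions in the opposite order from the paper. The paper works at the bicomplex level first: starting from a unit vector $x_1$ it forms the cyclic submodule $S_1=\overline{\mathrm{Span}\{x_1,Tx_1,T^2x_1,\ldots\}}$, checks that $S_1$ and $S_1^{\perp}$ are $T$-invariant, picks the next generator $x_2\in S_1^{\perp}$ subject to the condition that both idempotent components $\langle x_2e_1,x_2e_1\rangle_1$ and $\langle x_2e_2,x_2e_2\rangle_2$ are nonzero, and invokes Zorn's lemma to exhaust $H$ by mutually orthogonal cyclic bicomplex submodules $S_1,S_2,\ldots$; only then does it apply the bicomplex cyclic proposition (which itself splits each $S_n$ into its $e_1$ and $e_2$ parts) to produce the measures $\mu_n,\mu_n^{\prime}$. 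You instead split $H=e_1H_1\oplus e_2H_2$ at the outset and run the classical cyclic decomposition independently on $(H_1,T_1)$ and $(H_2,T_2)$, quoting only the complex cyclic proposition and reassembling with the idempotents at the very end. The two routes are logically equivalent, but each pays a different bookkeeping price. The paper's route must guarantee at every stage a generator whose two idempotent components are both non-null, which can fail when the cyclic multiplicities of $T_1$ and $T_2$ differ (the paper does not address this); your route makes that mismatch explicit and handles it by padding, though note that a padded zero subspace cannot be unitarily matched to an $L^2(\mu_n)$ of a probability measure, so one must allow degenerate summands of the form $e_1L^2(\mu_n)$ or $e_2L^2(\mu_n^{\prime})$ alone in the target space. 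Your version also makes transparent that the direct sum of the $M_i^{(n)}$ is again a single multiplication operator on the amalgamated measure space, a point the paper leaves implicit.
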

	\begin{proof}
		Let $\|x_1\|_X=1$ Let $$S_1=\overline{ Span \lbrace x_1, Tx_1,T^{2}x_1,\ldots \rbrace }$$ If $S_1=H$ we are done by the above proposition. Else note that
		$S_1$ is $T$ invariant. Consider $y=T(u)$, $u \in S_1$ There is a sequence $T^nx_1 \rightarrow u$. But $T$ is bounded implies $T^{n+1} x_1 \rightarrow Tu$. This means that $y=Tu \in \overline{S_1}.$ Also note that
		$ S_1^\perp= \lbrace x \in H : \langle x, y \rangle = 0, \text{for all } y \in S^1 \rbrace $ is invariant under $T$. Let $z=T(v) \in T(S_1^\perp) $. For $x \in S^1,$ $x= \lim T^nx_1$
		
		$$ \langle x,z \rangle =\langle \lim T^n x_1, Tv \rangle =\lim \langle T^* T^n x_1, v \rangle =0 $$
		Let $x_2 \in S_1^\perp $ and $x_2$ be a non null vector $\langle x_2e_1,x_2e_1 \rangle_1 \neq 0$(See \cite{Ger}) and  $\langle x_2e_2,x_2e_2 \rangle_2 \neq 0.$ So that $\|x_2\|_X=1.$ and let $$S_2=\overline{ Span \lbrace x_2, Tx_2,T^{2}x_2,\ldots \rbrace }.$$ An application of Zorn's Lemma show that $H$ can be written as a countable direct sum of $S_1.S_2,\ldots$ in which each $S_n$ is a cyclic sub module and $S_n$ are mutually orthogonal.The above proposition can be used to get the measures $\mu_n, \mu_n^\prime $ to get the result.
		
	\end{proof}
	\noindent
	Acknowledement : The author Akshay S. Rane would like to thank UGC faculty recharge program, India for their support.

\end{document}